\let\cite=\citep \usepackage{microtype}
\newcommand{\be}{\begin{equation}}
\newcommand{\ee}{\end{equation}}
\newcommand{\bes}{\begin{equation*}}
\newcommand{\ees}{\end{equation*}}
\newcommand{\bea}{\begin{eqnarray}}
\newcommand{\eea}{\end{eqnarray}}
\newcommand{\beas}{\begin{eqnarray}}
\newcommand{\eeas}{\end{eqnarray}}
\newcommand{\ben}{\begin{note}}
\newcommand{\een}{\end{note}}
\newcommand{\bexl}{\vskip0.1em\noindent\hrulefill\vskip1em\begin{ExerciseList}}
\newcommand{\eexl}{\end{ExerciseList}\hrulefill}
\newcommand{\bthm}{\begin{theorem}}
\newcommand{\ethm}{\end{theorem}}
\newcommand{\bpro}{\begin{prop}}
\newcommand{\epro}{\end{prop}}
\newcommand{\bcor}{\begin{corollary}}
\newcommand{\ecor}{\end{corollary}}
\newcommand{\bcon}{\begin{conjecture}}
\newcommand{\econ}{\end{conjecture}}
\newcommand{\bp}{\begin{proof}}
\newcommand{\ep}{\end{proof}}
\newcommand{\blem}{\begin{lemma}}
\newcommand{\elem}{\end{lemma}}
\newcommand{\bn}{\begin{note}}
\newcommand{\en}{\end{note}}
\newcommand{\benum}{\begin{enumerate}}
\newcommand{\eenum}{\end{enumerate}}
\newcommand{\bed}{\begin{defn}}
\newcommand{\eed}{\end{defn}}
\newcommand{\brem}{\begin{remark}}
\newcommand{\erem}{\end{remark}}
\newcommand{\btik}{\begin{tikzpicture}\begin{axis}[scale=0.5,axis y line=center, axis x line=middle]}
\newcommand{\etik}{\end{axis}\end{tikzpicture}}
\let\cong=\equiv
\newcommand{\upperRomannumeral}[1]{\uppercase\expandafter{\romannumeral#1}}
\newtheorem{theorem}[equation]{Theorem}     \newtheorem{lemma}[equation]{Lemma}          \newtheorem{corollary}[equation]{Corollary}  \newtheorem{proposition}[equation]{Proposition}
\theoremstyle{definition}
\theoremstyle{definition}
\newtheorem{defn}[equation]{Definition}
\theoremstyle{remark}
\newtheorem{conj}[equation]{Conjecture}
\theoremstyle{definition}
\newtheorem{remark}[equation]{Remark}
\let\isom=\simeq
\newcommand{\Q}{{\mathbb Q}}
\newcommand{\Z}{{\mathbb Z}}
\renewcommand{\int}{\operatorname{int}}
\newcommand{\legendre}[2]{\left(\frac{#1}{#2}\right)}
\begin{document}

\title[]{On the analytic rank of the twin prime elliptic curve $y^2=x(x-2)(x-p)$}\author{Kirti Joshi}\address{Math. department, University of Arizona, 617 N Santa Rita, Tucson
85721-0089, USA.} \email{kirti@arizona.edu}
\date{Preliminary Version: \today}

\thanks{}\subjclass{}\keywords{}

\newcommand{\fixnumberwithin}[1]{
\numberwithin{equation}{#1}
	\numberwithin{theorem}{#1}
	\numberwithin{conj}{#1}
	\numberwithin{lemma}{#1}
	\numberwithin{proposition}{#1}
	\numberwithin{corollary}{#1}
	\numberwithin{defn}{#1}
	\numberwithin{remark}{#1}
	\numberwithin{rem}{#1}
	\numberwithin{question}{#1}
}

\newcommand{\nws}{\fixnumberwithin{section}}
\newcommand{\nwss}{\fixnumberwithin{subsection}}
\newcommand{\nwsss}{\fixnumberwithin{subsubsection}}
 \fixnumberwithin{subsection} 

\begin{abstract}
Let $p\geq 7$ and suppose $(p,p-2)$ are twin prime numbers, in  \cite{hatley2009}, the elliptic curve $E_p:y^2=x(x-2)(x-p)$ was considered in the context of a conjecture by Jason Beers about the Mordell-Weil  ranks of $E_p/\Q$. I show that for $p\cong 3,5\bmod 8$, the analytic rank of $E_p$ is at least one (\Cref{th:main1}) in line with Beers' predictions. This is done by finding a formula (\Cref{th:main2}) for the global root number of $E_p$ for all twin prime pairs. I also show that Beers' conjecture, that for $p\cong 1\bmod 8$ the rank of $E_p$ is two, is false as stated because $E_{73}$ has rank zero.  In the light of \Cref{th:main2}, Beers' conjecture needs to be modified: if $p\cong 1\bmod 8$ then the rank of $E_p$ is zero or two (\Cref{con:beersconj}).
\end{abstract}
\maketitle

\let\cong=\equiv
\numberwithin{table}{subsection}
\makeatletter
\let\c@table\c@equation
\renewcommand\thetable{\thesubsection.\arabic{table}}
\section{Introduction}
\subsection{}
Let $p\geq 7$ be a prime number such that $p-2$ is also a prime number. Thus
$p,p-2$ form a pair of twin primes. It is a well-known conjecture that the number of twin prime numbers is infinite and this expectation stands considerably strengthened by the work of Y. Zhang, J. Maynard and others (see \cite[Theorem 1.2]{kowalski2018}). In \cite{hatley2009}, the following elliptic
curve has been studied
\begin{equation}\label{eq:twc}
E_p:y^2=x(x-2)(x-p).
\end{equation}
If $p,p-2$ are twin primes, I will call  the elliptic curve over $\Q$ given by \eqref{eq:twc} the \textit{twin prime elliptic curve}. It has good reduction outside $\{2,p, p-2\}$. By \Cref{pr:hatley}, the rank of $E_p$ is at most two  (here by the term \emph{Mordell-Weil rank of $E_p$}, one means the rank of the group $E_p(\Q)$ of the $\Q$-rational points of $E_p$). It has been conjectured by Jason Beers (see 
\cite{hatley2009}) that the twin prime curve $E_p$ has Mordell-Weil rank at
most one if $p\cong3,5,7\mod{8}$, and it is shown,  for $p\cong7\mod{8}$, that
the Mordell-Weil rank is zero (\cite[Theorem 1(a)]{hatley2009}) and if $p\cong 5\mod 8$ then the rank is at most one (\cite[Theorem 1(b)]{hatley2009}). The purpose of this brief note is to complement the result of \cite{hatley2009} by proving  the following
theorem.
\begin{theorem}\label{th:main1}
Assume $p\geq 7$ is a prime number  such that $E_p/\Q$ is the twin prime curve \eqref{eq:twc} (this means $p,p-2$ are a twin prime pair). 
\benum[label={\bf(\arabic{*})}]
\item If $p\cong3,5\mod{8}$, then one has
$$
{\rm ord}_{s=1}L(E_p,s)\geq 1.
$$
\item If $p\cong 5\mod 8$, then
the Mordell-Weil rank of $E_p$ is $\leq 1$.
\eenum
\end{theorem}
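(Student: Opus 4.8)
The plan for part~(1) is to read the lower bound on the order of vanishing off the sign of the functional equation. By modularity of elliptic curves over $\Q$, the completed $L$-function of $E_p$ is entire and satisfies a functional equation under $s\leftrightarrow 2-s$ whose sign equals the global root number $W(E_p)=\prod_v W_v(E_p)$, the product taken over all places of $\Q$; if $W(E_p)=-1$ then $L(E_p,s)$ vanishes to odd order at $s=1$, so ${\rm ord}_{s=1}L(E_p,s)\geq 1$. Thus part~(1) reduces to the evaluation of $W(E_p)$, which is the content of \Cref{th:main2}, and I would carry it out place by place.

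At the real place $W_\infty(E_p)=-1$; at each prime of good reduction $W_\ell(E_p)=+1$, and since $\Delta=64\,p^2(p-2)^2$ the only other places are $2$, $p$, $p-2$. Because $p$ and $p-2$ are prime, the reduction of \eqref{eq:twc} modulo $p$ is the nodal cubic $y^2=x^2(x-2)$ and modulo $p-2$ it is $y^2=x(x-2)^2$, so $E_p$ has multiplicative reduction (type $I_2$) at both; the node over $\F_p$ has tangent slopes $\pm\sqrt{-2}$, hence the reduction there is split iff $-2$ is a square modulo $p$, i.e.\ iff $p\equiv 1,3\bmod 8$, while the node over $\F_{p-2}$ has tangent slopes $\pm\sqrt{2}$, split iff $2$ is a square modulo $p-2$, i.e.\ iff $p-2\equiv\pm1\bmod 8$, i.e.\ again iff $p\equiv 1,3\bmod 8$. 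Therefore $W_p(E_p)$ and $W_{p-2}(E_p)$ agree (both $-1$ for $p\equiv1,3$, both $+1$ for $p\equiv5,7$), so $W_p(E_p)\,W_{p-2}(E_p)=+1$ in every class and
\[
W(E_p)=-\,W_2(E_p).
\]

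The remaining input, and the \emph{main obstacle}, is the local root number at $2$, where $E_p$ has additive, potentially good reduction. A short computation gives $c_4=16(p^2-2p+4)$, $c_6=64(p+2)(p-1)(p-4)$ and $\Delta=64\,p^2(p-2)^2$, so \eqref{eq:twc} is already minimal at $2$ and $v_2(j)=3v_2(c_4)-v_2(\Delta)=6\geq 0$ — precisely the regime not covered by the simple root-number formulas valid for $\ell\geq 5$. I would run Tate's algorithm at $2$ to fix the Kodaira type and conductor exponent, then apply the explicit $\ell=2$ tables (Halberstadt, Rizzo, or Dokchitser--Dokchitser), feeding in $c_4$, $c_6$, $\Delta$ reduced modulo a suitable power of $2$; the assertion to verify — this is \Cref{th:main2} — is that $W_2(E_p)=+1$ when $p\equiv 3,5\bmod 8$ and $W_2(E_p)=-1$ when $p\equiv1,7\bmod 8$. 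Combined with the previous paragraph, $W(E_p)=-1\iff p\equiv 3,5\bmod 8$, which proves part~(1). As consistency checks this also yields $W(E_p)=+1$ for $p\equiv 7\bmod 8$ (matching \cite[Theorem 1(a)]{hatley2009}) and $W(E_{73})=+1$ (matching the vanishing of the rank of $E_{73}$).

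Part~(2) is \cite[Theorem 1(b)]{hatley2009}, obtained by a descent via the $2$-isogeny $\phi$ with kernel $\langle(0,0)\rangle$: writing $E_p$ as $y^2=x\bigl(x^2-(p+2)x+2p\bigr)$, its $\phi$-image is the curve $E_p'$ given by $y^2=x\bigl(x^2+2(p+2)x+(p-2)^2\bigr)$, and the standard bound
\[
\operatorname{rk}E_p(\Q)\ \leq\ \dim_{\F_2}\operatorname{Sel}^{(\phi)}(E_p/\Q)+\dim_{\F_2}\operatorname{Sel}^{(\hat\phi)}(E_p'/\Q)-2
\]
is shown, for $p\equiv 5\bmod 8$, to have right-hand side $\leq 1$: the decisive points are that $2$ is a non-square modulo both $p$ and $p-2$ in this congruence class and that the corresponding homogeneous spaces fail to be $2$-adically soluble. (One can also see this from $W(E_p)=-1$ via the $2$-parity theorem, using the $2$-Selmer information behind \Cref{pr:hatley}.) Either way $\operatorname{rk}E_p(\Q)\leq 1$.
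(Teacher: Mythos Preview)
Your proposal is correct and follows essentially the same route as the paper: reduce part~(1) via modularity and the functional equation to the root-number computation of \Cref{th:main2}, handle $W_\infty$, $W_p$, $W_{p-2}$ by the tangent-cone analysis and $W_2$ via Tate's algorithm plus Halberstadt's $2$-adic tables, and cite \cite[Theorem~1(b)]{hatley2009} for part~(2). Your observation that $W_p(E_p)\,W_{p-2}(E_p)=+1$ in every residue class (so that $W(E_p)=-W_2(E_p)$) is a tidy repackaging the paper does not make explicit, but the substantive step --- reading $W_2$ off the Halberstadt entry for Kodaira type~III with $(v_2(c_4),v_2(c_6),v_2(\Delta))=(4,\geq 7,6)$ --- is identical.
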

The hypothesis that $(p,p-2)$ is a pair of twin primes cannot be relaxed in \Cref{th:main1}{\bf(1)} (see \Cref{re:optimal}). The new result (beyond \cite{hatley2009}) is \Cref{th:main1}{\bf(1)}, which is proved by finding a formula for the root number of the twin prime elliptic curve $E_p$. This is the content of \Cref{th:main2}.
The Birch and Swinnerton-Dyer conjecture, together with \Cref{th:main1} and \cite{hatley2009}, imply that for the above sort of twin primes, the Mordell-Weil rank of $E_p$ is exactly equal to one. Numerical examples of curves covered by the theorem are given in \Cref{ss:numerical}. This data is not available on \cite{lmfdb} at the moment. In the remaining case, namely $p\cong 1\bmod 8$, according to \cite{hatley2009}, Jason Beers had conjectured that the rank of $E_p$ is always equal to two. This turns out to be false--counter examples are provided by \Cref{pr:beersconj}. The corrected version of Beers' conjecture is given in \Cref{con:beersconj}--the needed modification is that for $p\cong 1\bmod 8$, the rank of $E_p$ is either zero or two. 
\subsection{Acknowledgments} It is a pleasure to thank Jeffrey Hatley for a stimulating correspondence and for providing \Cref{pr:hatley} (and its proof), the generators of Mordell-Weil group of the curve $E_{4273}$ in \Cref{pr:beersconj}, \Cref{rem:hatley} and a careful reading of the manuscript.
\section{Preliminaries}
\subsection{Invariants of $E_p$}
I write 
\be\label{eq:min} E_p: y^2=x(x-2)(x-p)=x^3-(p+2)x^2+2px.\ee Thus in Tate's notation \cite[Chapter II]{silverman-arithmetic}, 
one has
\be a_1=a_3=a_6=0;a_2=-(p+2);a_4=2p.\ee
Using \cite[Chapter 1]{silverman-arithmetic}, one easily computes the invariants:
\be\label{eq:invariants}
\begin{aligned}
b_2&=&-4(p+2)\\
b_4&=&4p\\
b_6&=&0\\
b_8&=&-4p^2\\
c_4&=&16((p+2)^2-6p)\\
c_6&=&4^3(p+2)((p+2)^2-9p)\\
\Delta=\Delta(E_p)&=&-64p^2(p-2)^2\\
j(E_p)&=&\frac{c_4^3}{\Delta}.
\end{aligned}
\ee
One checks easily from this that $E_p$ and $E_q$ cannot be isomorphic
over $\Q$ if $p,q$ are distinct primes such that $p,p-2$ and $q,q-2$
are all primes.

\begin{proposition}
Two twin prime elliptic curves   $E_p$ and $E_q$ ($p,q\geq 7$) are $\bar{\Q}$-isomorphic  if and only if $p=q$ (and hence $p-2=q-2$).
\end{proposition}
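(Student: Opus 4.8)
The plan is to reduce the claim to the standard fact that two elliptic curves over a field $K$ become isomorphic over $\bar{\Q}$ precisely when they share the same $j$-invariant (\cite[Chapter III]{silverman-arithmetic}), and then to an elementary monotonicity argument. Since $p,q\geq 7$, the discriminant $\Delta(E_p)=-64p^2(p-2)^2$ is nonzero, so $E_p$ (and likewise $E_q$) is a genuine elliptic curve; thus it suffices to prove that $j(E_p)=j(E_q)$ implies $p=q$, the converse being trivial. Note that only $p,q\geq 7$ — and not the twin prime hypothesis — is actually needed.

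First I would compute $j(E_p)$ from \eqref{eq:invariants}. Using $(p+2)^2-6p=p^2-2p+4$ and $16^3/64=64$, one obtains
\[
j(E_p)\;=\;\frac{c_4^3}{\Delta}\;=\;-64\,\frac{(p^2-2p+4)^3}{p^2(p-2)^2}.
\]
It is then convenient to set $w_p:=p^2-2p=p(p-2)$, so that $p^2-2p+4=w_p+4$ and $p^2(p-2)^2=w_p^2$, giving $j(E_p)=-64\,h(w_p)$ where $h(w):=(w+4)^3/w^2$. Hence $j(E_p)=j(E_q)$ is equivalent to $h(w_p)=h(w_q)$, and since the map $p\mapsto p(p-2)$ is injective on $[1,\infty)$ it is enough to show $h$ is injective on the relevant range of $w$.

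It remains to establish that injectivity. From $\frac{d}{dw}\log h(w)=\frac{3}{w+4}-\frac{2}{w}=\frac{w-8}{w(w+4)}$ one sees that $h$ is strictly increasing on $(8,\infty)$; since $p,q\geq 7$ force $w_p=p(p-2)\geq 35$ and $w_q\geq 35$, the equality $h(w_p)=h(w_q)$ yields $w_p=w_q$, i.e.\ $p^2-2p=q^2-2q$, i.e.\ $(p-q)(p+q-2)=0$, and as $p+q-2>0$ we conclude $p=q$. There is no genuine obstacle here; the only places calling for (routine) care are the simplification of $c_4$ and $\Delta$, and the remark that invoking the $j$-invariant criterion requires $E_p$ and $E_q$ to be nonsingular — which is exactly where the hypothesis $p,q\geq 7$ (equivalently $p,q\neq 0,2$) is used.
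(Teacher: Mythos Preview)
Your argument is correct and takes a genuinely different route from the paper. The paper cross-multiplies $j(E_p)=j(E_q)$ and reduces modulo $q$ to force $q\mid p(p-2)$, hence $q=p$ or $q=p-2$; the residual case $q=p-2$ is then eliminated by observing that $(p,p-2)$ and $(q,q-2)=(p-2,p-4)$ would form a prime triple and deriving a contradiction modulo $p-4$. Thus the paper's proof leans on the twin prime hypothesis in an essential way. Your approach instead packages $j(E_p)$ as $-64\,h(w_p)$ with $w_p=p(p-2)$ and $h(w)=(w+4)^3/w^2$, and settles the matter by the monotonicity of $h$ on $(8,\infty)$. This is cleaner, avoids any casework, and---as you correctly note---never uses that $p-2$ or $q-2$ is prime; only $p,q\geq 7$ (to land in the monotone range) is needed. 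The paper's method has the virtue of being purely arithmetic, but your calculus argument is both shorter and strictly more general.
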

\bp 
By \cite[Chap II, Proposition 1.4(b)]{silverman-arithmetic}, one knows that $E_p$ and $E_q$ are $\bar{\Q}$-isomorphic if and only if their $j$-invariants coincide i.e. $j(E_p)=j(E_q)$. This gives
\be 
\frac{(16((p+2)^2-6p))^3}{-64p^2(p-2)^2}=\frac{(16((q+2)^2-6q))^3}{-64q^2(q-2)^2}
\ee
and hence
\be 
({-64q^2(q-2)^2}){(16((p+2)^2-6p))^3}={(16((q+2)^2-6q))^3}({-64p^2(p-2)^2}).
\ee
Hence
$$0\cong {(16((q+2)^2-6q))^3}({-64p^2(p-2)^2}) \cong 16\cdot 4^3\cdot (-64) \cdot p^2\cdot (p-2)^2\bmod{q}.$$
Since $q$ does not divide $16((q+2)^2-6q)$ and $q\geq 7$ does not divide $64$, one sees that $q|(p(p-2))$ which gives $q=p$ or $q=p-2$. If $p=q$, then the assertion is proved. 

Hence assume $q=p-2$. Then $p,p-2$ and $q=p-2,q-2=p-4$ are twin prime pairs i.e. $p,p-2,p-4$ are all primes forming a prime triple (note that here one uses $p\geq 7$ which gives $p-4\geq 3$). 
So now one is assuming that $p,p-2,p-4$ forms a prime triple and $E_p\isom E_{{p-2}}$ is a $\bar{\Q}$-isomorphism. Then the equality 
$$j(E_p)=j(E_{p-2})$$
gives, on clearing denominator, the equality
\be 
({-64(p-2)^2(p-4)^2}){(16((p+2)^2-6p))^3}={(16((p)^2-6(p-2))^3}({-64p^2(p-2)^2}).
\ee
Noting that $p-4\geq 3$ is a prime number and reading the above equation modulo $(p-4)$ one gets
$$0\cong {(16((p)^2-6(p-2))^3}({-64p^2(p-2)^2}) \bmod{(p-4)}.$$
Since $p-4$ does not divide $p$ or $p-2$;  $p\cong 4\bmod{p-4}$ and $p-2\cong 2\bmod{p-4}$ one sees that
$$0\cong (16(4^2-12))^3(-64)\bmod{(p-4)}$$
which is impossible as $p-4$ is an odd prime. Thus the assumption  $E_p\isom E_{p-2}$  leads to a contradiction. Hence $q=p$ is the only possibility. This completes the proof of the proposition.
\ep

\subsection{The rank bound $0\leq r(E_p)\leq 2$}
The following was pointed out to me by Jeffery Hatley:
\begin{proposition}\label{pr:hatley}
	For any pair of twin primes $(p,p-2)$, the Mordell-Weil rank of the twin-prime elliptic curve $E_p$ \eqref{eq:twc} is at most two.
\end{proposition}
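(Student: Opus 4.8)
The plan is to run a complete $2$-descent. Since the cubic $x(x-2)(x-p)$ splits over $\Q$, the curve $E_p$ has full rational $2$-torsion, $E_p(\Q)[2]=\{O,(0,0),(2,0),(p,0)\}\cong(\Z/2)^2$, and there is the classical injective homomorphism
\[
\delta\colon E_p(\Q)/2E_p(\Q)\;\into\;\Q^*/(\Q^*)^2\times\Q^*/(\Q^*)^2,\qquad (x,y)\longmapsto\bigl(x,\;x-2\bigr),
\]
with the standard modification at the $2$-torsion points and at $O$, and with $\ker\delta=2E_p(\Q)$ (see e.g. \cite[Chapter X]{silverman-arithmetic}). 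Setting $b_1=x$, $b_2=x-2$, $b_3=x-p$, so that $b_1b_2b_3=y^2$, it is convenient to record $\delta$ as $P\mapsto(b_1,b_2,b_3)$ taking values in the subgroup $\{(v_1,v_2,v_3):v_1v_2v_3\in(\Q^*)^2\}$ of $(\Q^*/(\Q^*)^2)^3$. Since $\dim_{\F_2}E_p(\Q)/2E_p(\Q)=\rank(E_p)+\dim_{\F_2}E_p(\Q)[2]=\rank(E_p)+2$, it suffices to show that the image of $\delta$ has $\F_2$-dimension at most $4$.

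Next I would localize the image. The usual greatest-common-divisor argument shows each $b_i$ has even valuation at every prime outside $S=\{2,p,p-2\}$: a prime $\ell\notin S$ dividing two of the $b_i$ would divide one of the differences $b_1-b_2=2$, $b_2-b_3=2-p$, $b_1-b_3=-p$, hence $\ell$ divides at most one $b_i$, forcing even valuation there because $b_1b_2b_3$ is a square. Thus $\delta$ takes values in $V\times V\times V$, where $V=\langle-1,\,2,\,p,\,p-2\rangle\subset\Q^*/(\Q^*)^2$ has $\F_2$-dimension $4$; together with the product-a-square condition this is an $8$-dimensional $\F_2$-vector space. The heart of the proof is to cut this bound in half by treating each of the four ``coordinate directions'' of $V$ (the primes $2$, $p$, $p-2$ and the sign) separately.

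The four estimates I would carry out are: \textbf{(a)} at $p$, since $b_1-b_3=p$ one has $p\mid b_1\iff p\mid b_3$, and then $p\nmid b_2$ (else $p\mid b_1-b_2=2$), while $v_p(b_1)+v_p(b_3)=v_p(y^2)$ is even, so the triple of $p$-adic valuations modulo $2$ lies in $\langle(1,0,1)\rangle$; \textbf{(b)} at $p-2$, symmetrically (using $b_2-b_3=p-2$ and $p-2\ge5$), the triple of $(p-2)$-adic valuations modulo $2$ lies in $\langle(0,1,1)\rangle$; \textbf{(c)} at $2$, a short case analysis --- either $b_1,b_2$ are both odd and $v_2(b_3)=v_2(y^2)$ is even, or $b_1,b_2$ are both even, in which case exactly one of them has $2$-adic valuation $1$ and the parity of $v_2(y^2)$ forces the other to have odd valuation --- shows the triple of $2$-adic valuations modulo $2$ lies in $\langle(1,1,0)\rangle$; and \textbf{(d)} at the real place, since $b_1b_2b_3=y^2\ge0$ and $0<2<p$, the only possible sign patterns of $(b_1,b_2,b_3)$ are $(+,+,+)$ (when $x>p$) and $(+,-,-)$ (when $0<x<2$), so the sign direction contributes only $\langle(+,-,-)\rangle$. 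A sanity check on the $2$-torsion points, $\delta(0,0)\equiv(2p,-2,-p)$, $\delta(2,0)\equiv(2,-2(p-2),-(p-2))$ and $\delta(p,0)\equiv(p,\,p-2,\,p(p-2))$, confirms they all lie in the predicted subgroup.

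Assembling (a)--(d), the image of $\delta$ lies in a subgroup of $\F_2$-dimension at most $1+1+1+1=4$, whence $\rank(E_p)+2\le4$ and $\rank(E_p)\le2$. The one point requiring care --- the main obstacle, such as it is --- is setting up the four local restrictions on a common footing (the sign/valuation decomposition of $V$ into four independent $\F_2$-lines) so that they genuinely multiply to a dimension-$4$ bound rather than a weaker one; in particular the case analysis at the prime $2$ must be done attentively, but nothing here is deep.
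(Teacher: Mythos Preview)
Your argument is correct. The paper's proof is much shorter: it simply invokes \citep*[Proposition~1.1]{lozano-robledo2008}, which for any curve $y^2=x^3+Ax^2+Bx$ gives the bound $r\le\nu(A^2-4B)+\nu(B)-1$ (where $\nu$ counts distinct prime divisors), and then computes $A=-(p+2)$, $B=2p$, $A^2-4B=(p-2)^2$ to obtain $r(E_p)\le 1+2-1=2$. That cited inequality is itself proved by $2$-isogeny descent through the kernel $\langle(0,0)\rangle$, so the underlying mechanism is the same as yours; the difference is that you exploit the \emph{full} rational $2$-torsion of $E_p$ to run a complete $2$-descent from scratch, with explicit local constraints at $2$, $p$, $p-2$ and $\infty$, whereas the paper outsources the work to a black-box bound that only requires one rational $2$-torsion point. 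Your route is self-contained and makes transparent exactly which local conditions pin the Selmer dimension down to $4$; the paper's route is a two-line citation. Both arrive at the same bound $\dim_{\F_2}E_p(\Q)/2E_p(\Q)\le 4$, hence $r(E_p)\le 2$. (One tiny omission in your case analysis at~$2$: you should also note that if $v_2(x)<0$ then $v_2(x)$ is automatically even, so all three $v_2(b_i)$ are equal and even; your dichotomy ``both odd or both even'' tacitly assumes $x$ is $2$-integral.)
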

\bp 
Tautologically, $r(E_p)\geq0$. So the non-trivial assertion is the upper bound. This is a consequence of \citep*[Proposition 1.1]{lozano-robledo2008}.  In the notation of  the said proposition, the Mordell-Weil rank $r(E_p)$ of the twin-prime elliptic curve $E_p$ satisfies
$$r(E_p)\leq \nu(A^2-4B)+\nu(B)-1,$$
where  $\nu(n)$ is the number of distinct prime divisors of any integer $n$ and  $A=-(p+2)$ and $B=2p$.
Thus computing $A^2-4B=(-(p+2))^2-4(2p)=p^2+4p+4-8p=(p-2)^2$ and hence $\nu(A^2-4B)=\nu((p-2)^2)=1$ (as $p-2$ is a prime) and $\nu(4B)=\nu(8p)=2$. Thus $r(E_p)\leq 2$. This proves the claim.
\ep

\section{Reduction types at $2,p,p-2$}
\subsection{} I begin with the following lemmas.
\blem\label{le:val-lemma}
Assume $E_p$ is a twin prime elliptic curve with $p\geq 7$. Then the $j$-invariant $j(E_p)$ of $E_p$ is a $2$-adic integer, but $j(E_p)$ is not a $p$-adic integer and nor is it a $(p-2)$-adic integer. 
\elem
\bp Observe from \eqref{eq:invariants} that $j(E_p)$ is a $2$-adic integer as $p,p-2$ are primes and $p\geq 7$ by assumption. Also note from \eqref{eq:invariants} that the valuation of $j(E_p)$ at $p$ is negative as $$c_4\not\cong0\mod p \text{ and } \Delta\cong 0\bmod p.$$ Hence $j(E_p)$ is not a $p$-adic integer.

Next suppose that the $(p-2)$-adic valuation of $j(E_p)$ is non-negative. Then the numerator, $c_4^3=(16((p+2)^2-6p))^3$, of $j(E_p)$ must be divisible by $p-2$ i.e. 
$$c_4=16((p+2)^2-6p)\cong 0\pmod{p-2}.$$
Writing $c_4$ as $$c_4=16((p-2+4)^2-6(p-2+2))$$
and reading this modulo $(p-2)$ gives
$$0\cong c_4\cong  16((p+2)^2-6p)\cong 16((p-2+4)^2-6(p-2+2)) \cong 16(4^2-12)=64\pmod{p-2}.$$
Thus $64\cong 0\pmod{p-2}$ i.e. $(p-2)|2$ and as $p-2$ is a prime number, $p-2=2$. This contradicts my assumption that $p,p-2$ are both prime numbers and $p\geq 7$ (so $p-2\geq 5$). Thus, the $(p-2)$-adic valuation of $j(E_p)$ is negative. This completes the proof of the \Cref{le:val-lemma}.
\ep 

\blem
Let $E_p$ be a twin prime elliptic curve with $p\geq 7$. Then the equation \eqref{eq:min} is a global minimal Weierstrass equation for $E_p$.
\elem
\bp 
From \eqref{eq:invariants}, and as $p,p-2$ are prime numbers, $$\Delta(E_p)=-64p^2(p-2)^2=-2^6\cdot p^2\cdot (p-2)^2$$ is a prime factorization of $\Delta(E_p)$. Hence one sees from this that the valuation of $\Delta(E_p)$ at  each of the primes $2,p,p-2$, is less than $12$. Thus from \cite[Chap. VII, Remark 1.1]{silverman-arithmetic}, one sees that \eqref{eq:min} is a minimal Weierstrass equation at all primes and hence is a global minimal Weierstrass equation for $E_p$. This completes the proof of this lemma.
\ep

\subsection{} I begin with the determination of the reduction types of $E_p$ at the
primes dividing $\Delta(E_p)$. 
\begin{proposition}
Let $E_p$ be the twin prime elliptic curve~\eqref{eq:twc} with $p\geq 7$. Then
\benum[label={\bf(\arabic{*})}]\label{pr:reduction}
\item $E_p$ has good reduction at all primes other than $\{2,p,p-2\}$.
\item $E_p$ has multiplicative reduction at $p$ (resp. $p-2$) which is
split according to $\legendre{-2}{p}$ (resp. $\legendre{2}{p-2}$). 
\item The
curve has type III\ reduction modulo $2$.
\eenum
\end{proposition}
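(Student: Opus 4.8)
The plan is to obtain (1) directly from the minimal discriminant, to obtain the multiplicative half of (2) from the computation of $c_4$ already carried out in the proof of \Cref{le:val-lemma} together with Silverman's reduction criterion, to decide split versus non--split in (2) by reducing \eqref{eq:twc} modulo $p$ and modulo $p-2$ and reading off the node, and to prove (3) by running Tate's algorithm at $2$. Throughout I use freely the two preceding lemmas---that \eqref{eq:min} is a global minimal Weierstrass equation, with $\Delta(E_p)=-2^{6}p^{2}(p-2)^{2}$---and the invariants listed in \eqref{eq:invariants}. For (1): since \eqref{eq:min} is minimal, the primes of bad reduction of $E_p$ are exactly the prime divisors of $\Delta(E_p)$, namely the primes in $\{2,p,p-2\}$, so $E_p$ has good reduction at every other prime (\cite[Chap.~VII, Prop.~5.1]{silverman-arithmetic}). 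For the multiplicative assertion in (2): the proof of \Cref{le:val-lemma} shows $c_4\equiv 64\not\equiv 0\pmod{p}$ and $c_4\equiv 64\not\equiv 0\pmod{p-2}$ (using $p\geq 7$ and $p-2\geq 5$), so $p\nmid c_4$ and $(p-2)\nmid c_4$ while $p,(p-2)\mid\Delta(E_p)$; hence by \cite[Chap.~VII, Prop.~5.1]{silverman-arithmetic} the reduction at $p$ and at $p-2$ is multiplicative, not additive.

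To decide whether these multiplicative reductions are split, recall that a multiplicative reduction is split precisely when the two tangent lines at the node are rational over the residue field. Reducing \eqref{eq:twc} modulo $p$ and using $x-p\equiv x$, the reduced curve is $y^{2}=x^{2}(x-2)$, singular at $(0,0)$, with tangent cone $y^{2}+2x^{2}=0$; the two linear factors of this quadratic form are defined over $\F_p$ exactly when $-2$ is a square modulo $p$, so the reduction at $p$ is split iff $\legendre{-2}{p}=1$. Likewise, reducing modulo $p-2$ and using $x-p\equiv x-2$, the reduced curve is $y^{2}=x(x-2)^{2}$, singular at $(2,0)$, and after the substitution $x=u+2$ its tangent cone is $y^{2}-2u^{2}=0$, so the reduction at $p-2$ is split iff $2$ is a square modulo $p-2$, i.e. iff $\legendre{2}{p-2}=1$. (As an independent check, a multiplicative reduction coming from a minimal equation is split iff $-c_6$ is a square in the residue field, and from \eqref{eq:invariants} one gets $-c_6\equiv-2^{9}\pmod{p}$ and $-c_6\equiv 2^{9}\pmod{p-2}$, which gives the same two conditions.)

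For (3), the Kodaira type at $2$ cannot simply be read off the pair $(v_2(c_4),v_2(\Delta))$---the table that does this is valid only in residue characteristic $\geq 5$---so I will run Tate's algorithm on \eqref{eq:min} at $2$, using that $p$ is odd. The input data are $v_2(\Delta(E_p))=6$, $v_2(b_2)=v_2(-4(p+2))=2$, $v_2(b_8)=v_2(-4p^{2})=2$, together with $a_3=a_6=0$ and $v_2(a_4)=v_2(2p)=1$. Running the algorithm: $v_2(\Delta(E_p))>0$, so the reduction is bad; $a_3=a_6=0$ and $v_2(a_4)>0$, so the singular point of $E_p\bmod 2$ is already at $(0,0)$; $v_2(b_2)\geq 1$, so the reduction is additive and $E_p$ is not of type $\mathrm{I}_n$; the translation $y\mapsto y+x$ brings the equation into the form demanded at the next step without altering $b_2$ or $b_8$, and there $a_6=0$ gives $v_2(a_6)\geq 2$, so $E_p$ is not of type $\mathrm{II}$; finally $v_2(b_8)=2<3$, so $E_p$ is of type $\mathrm{III}$ at $2$. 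The step I expect to be the main obstacle is precisely this last one: one must check that the coordinate changes prescribed by Tate's algorithm at the type-$\mathrm{I}_n$ and type-$\mathrm{II}$ stages really can be made over $\Z_2$ without disturbing the minimality of \eqref{eq:min} or the location of the singular point. Since $a_1=a_3=a_6=0$ and $v_2(a_4)=1$, careful bookkeeping shows the single translation $y\mapsto y+x$ already suffices and leaves $b_2$ and $b_8$ untouched, so the answer, type $\mathrm{III}$, stands; the remaining verifications are direct substitutions into \eqref{eq:invariants}.
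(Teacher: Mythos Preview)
Your proof is correct and follows essentially the same approach as the paper: part (1) from the minimal discriminant, part (2) from $c_4\not\equiv 0$ together with the tangent-cone analysis at the node, and part (3) via Tate's algorithm at $2$ checking $2\mid b_2$, $4\mid a_6$, $8\nmid b_8$. The only differences are cosmetic---you supply an additional sanity check via the $-c_6$ criterion and make explicit the $y\mapsto y+x$ translation (to force $2\mid a_1,a_2$) that the paper leaves implicit; since this translation fixes $a_6$ and all the $b_i$, it does not affect the outcome.
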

\begin{proof}
Observe that the only primes dividing
$\Delta$ are $2,p,p-2$, hence 
the first assertion {\bf(1)} is proved. 

So it remains to prove {\bf(2,3)}.	One first determines the reduction type at the twin prime pair $p,p-2$ as this is quite easy. As
$p\geq7$, so $c_4\not\cong0\mod p$, so $E_p$ has multiplicative
reduction at $p$. To determine whether it is split or not according
as the tangent cone $y^2+2x^2\mod{p}$ is split or nor. Equivalently
according as $\legendre{-2}{p}$.

Next the reduction type at $p-2$ is also similarly determined. Write the
equation \eqref{eq:twc} as
$$y^2-x(x-p+2-2)(x-2)=0,$$ reading this modulo $p-2$ one sees that the left hand side is
$$y^2-x(x-2)^2\bmod{(p-2)},$$
and expanding this around $(2,0)$ (i.e. expressing in terms of the
generators of the maximal ideal of the singular point) one sees that the left hand side is
$$y^2-(x-2)^3-2(x-2)^2\mod{(p-2)}.$$
Thus the tangent cone is given by $y^2-2(x-2)^2\mod{(p-2)}$ and thus
the reduction at $p-2$ is multiplicative and is split according to
$\legendre{2}{p-2}$. This completes the proof of {\bf(2)}.

For the proof {\bf(3)}, I will use Tate's algorithm (see
\cite[Chapter 4, \S9]{silverman-advanced}) to find the reduction type at the prime $2$. Now one runs through the steps of the algorithm \cite[Page 366]{silverman-advanced}.

As $y^2\cong x(x-2)(x-p)\cong x^2(x-1)\mod 2$, one sees that $(0,0)$ is a singular point and hence the reduction type is not I${}_0$ and as $2|b_2$ one sees that the reduction type is not I${}_n$ for any $n\geq 1$. Since $2|b_2$ and $4|a_6$, the reduction type is not II. Since $4|a_6$, but $8\nmid b_8$, the reduction type is III, 
and the exponent of the conductor $f=v_2(\Delta)=5$. This completes the proof of {\bf(3)}.
\end{proof}

\section{The root number of $E_p$}

\subsection{}
\Cref{th:main1} is proved by computing the root number of $E_p$. This is the content of \Cref{th:main2} proved below.
\begin{theorem}\label{th:main2}
Let $E_p$ be the twin prime curve \eqref{eq:twc}. Then its global root
number $w_{E_p}$ is given by the following formula:
\begin{equation}\label{rootnum}
w_{E_p}=(-1)^{\frac{p-1}{2}+\frac{(p-2)^2-1}{8}}.
\end{equation}
Explicitly
\begin{table}[hp]
	\centering
\begin{tabular}{|c|c|}
		\hline
$p\mod{8}$ & $w_{E_p}$ \\
		\hline
		$1$ & $1$ \\
		\hline
		$3$ & $-1$ \\
		\hline
		$5$ & $-1$ \\
		\hline
		$7$ & $1$\\
		\hline
	\end{tabular}
\end{table}
\end{theorem}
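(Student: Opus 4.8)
The plan is to compute the global root number $w_{E_p}$ as the product of local root numbers $w_v(E_p)$ over all places $v$ of $\Q$, using the reduction data already established in \Cref{pr:reduction}. Since $E_p$ has good reduction outside $\{2,p,p-2\}$ and is an elliptic curve over $\Q$, the only nontrivial local factors come from $v\in\{\infty,2,p,p-2\}$; all other primes contribute $+1$. So the identity to exploit is
$$
w_{E_p}=w_\infty\cdot w_2\cdot w_p\cdot w_{p-2},
$$
and the task reduces to pinning down each of these four factors.

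First I would record the easy ones. The archimedean factor $w_\infty=-1$ always, for any elliptic curve over $\Q$. At the two primes $p$ and $p-2$ the curve has multiplicative reduction by \Cref{pr:reduction}{\bf(2)}, so the local root number there is $-1$ if the reduction is split and $+1$ if nonsplit; concretely $w_p=-\legendre{-2}{p}$ and $w_{p-2}=-\legendre{2}{p-2}$ with the Kronecker symbols as identified in \Cref{pr:reduction}{\bf(2)}. The prime $2$ is the delicate one: \Cref{pr:reduction}{\bf(3)} says $E_p$ has additive, potentially multiplicative reduction of Kodaira type III at $2$, with conductor exponent $f_2=5$. The plan here is to invoke a known table of local root numbers at $2$ for elliptic curves over $\Q_2$ — e.g. the tables of Halberstadt, or Rizzo's explicit algorithm, or the Dokchitser--Dokchitser formulas — which express $w_2$ in terms of $v_2(\Delta)$, $v_2(c_4)$, $v_2(c_6)$ and suitable congruence classes of the unit parts of these invariants mod powers of $2$. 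From \eqref{eq:invariants} one has $c_4=16((p+2)^2-6p)$, $c_6=64(p+2)((p+2)^2-9p)$, $\Delta=-64p^2(p-2)^2$; reducing the relevant unit parts modulo $8$ (or $16$ or $32$ as the table demands), everything becomes a function of $p\bmod 8$ alone, since $p$ is odd. This yields $w_2$ as an explicit function of $p\bmod 8$.

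The last step is to assemble the pieces and rewrite the product in the closed form \eqref{rootnum}. Using quadratic and supplementary reciprocity, $\legendre{-2}{p}=(-1)^{(p-1)/2+(p^2-1)/8}$ and $\legendre{2}{p-2}=(-1)^{((p-2)^2-1)/8}$, and both $p$ and $p-2$ being odd primes $\geq 5$ these symbols are well-defined; also $\legendre{-2}{p}\legendre{2}{p-2}$ and the $\{\pm1\}$-valued function $w_2(p\bmod 8)$ multiply together, and after combining with $w_\infty=-1$ one should land exactly on $(-1)^{(p-1)/2+((p-2)^2-1)/8}$. Then I would simply tabulate: for $p\equiv 1,3,5,7\pmod 8$ the exponent $(p-1)/2+((p-2)^2-1)/8$ is even, odd, odd, even respectively, giving $w_{E_p}=+1,-1,-1,+1$, which is the displayed table.

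I expect the main obstacle to be the computation of $w_2$: type III with $f_2=5$ is exactly the regime where the naive "potentially multiplicative $\Rightarrow$ $-\legendre{-1}{\cdot}$" shortcut does not apply (that is type $\mathrm{I}_n^*$ / potentially multiplicative with $v(j)<0$), and one must either carefully run Tate's algorithm over a ramified quadratic extension to identify the quadratic character by which $E_p$ becomes semistable, or quote the correct row of Halberstadt's/Rizzo's table and verify that the congruence hypotheses on $c_4,c_6,\Delta$ mod $32$ are met for each residue class of $p$ mod $8$. Care is also needed because the twin-prime hypothesis — i.e. that $p-2$ is prime and $\geq 5$, so in particular $3\nmid(p-2)$ etc. — is what guarantees $v_p(\Delta)=v_{p-2}(\Delta)=2$ and keeps the bad fibres at $p,p-2$ multiplicative rather than degenerating; this is presumably why the hypothesis cannot be dropped, as flagged in \Cref{re:optimal}.
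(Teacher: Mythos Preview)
Your proposal is correct and follows essentially the same route as the paper: decompose $w_{E_p}=w_\infty w_2 w_p w_{p-2}$, read off $w_\infty=-1$ and $w_p,w_{p-2}$ from the split/nonsplit multiplicative data of \Cref{pr:reduction}, and extract $w_2$ from Halberstadt's table using the valuations $(v_2(\Delta),v_2(c_4),v_2(c_6))=(6,4,\geq7)$ and congruences on the unit parts (the paper checks $c_4'\equiv 3\bmod 4$ and then computes $T(p)=c_4'-4c_6/2^7\bmod 16$, obtaining $w_2=-\legendre{2}{p}$), before assembling to $\legendre{-1}{p}\legendre{2}{p-2}$. One terminological slip: you call the reduction at $2$ ``additive, potentially multiplicative of type III'', but type III is potentially \emph{good} (indeed $j(E_p)\in\Z_2$ by \Cref{le:val-lemma}); you correctly note a moment later that the potentially-multiplicative shortcut does not apply, so this is just a wording inconsistency rather than a gap.
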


\begin{proof}
For simplicity, I will write $W_E$ instead of $W_{E_p}$ for the global root number of $E=E_p$ and if $q$
is a prime, then $W_q$ instead of $W_{E_p,q}$ for the local root number at $q$. Recall that the
global root number $W_E$ of $E$, is a product of local root numbers:
$$W_E=W_\infty W_2 W_{p} W_{p-2}.$$
Further by \cite[Proposition 1]{rohrlich1993}, one knows that  for all
elliptic curves one has 
\be\label{eq:winf} W_\infty=-1.\ee 
By \cite[Proposition 3(iii)]{rohrlich1993}, the local root number for
primes of semistable reduction is $-1$ for split multiplicative
reduction and $+1$ for non-split multiplicative reduction. From this
one deduces that
\be \label{eq:wp}
\begin{aligned}
W_{p}&=&-\legendre{-2}{p},\\
W_{p-2}&=&-\legendre{2}{p-2}
\end{aligned}
\ee
So it remains to determine $W_2$. I will use \cite[Table 1]{halberstadt1998} for
this purpose. Let $v_2$ be the standard $2$-adic valuation on $\Q$.
For $x\in\Z$,  write $x'=x/2^{v_2(x)}$. Recall from
\eqref{eq:invariants} that
\begin{eqnarray}
\Delta&=&-2^6\cdot p^2\cdot (p-2)^2\\
c_4&=&16((p+2)^2-6p)\\
c_6&=&2^6(p+2)((p+2)^2-9p)
\end{eqnarray}
Hence one sees that $v_2(\Delta)=6$, $v_2(c_4)=4$ and
$v_2(c_6)=6+v_2((p+2)((p+2)^2-9p))\geq 7$. By \Cref{pr:reduction}, the reduction type of $E_p$ at $2$  is III , there is a unique entry in \cite[Table 1]{halberstadt1998} with Kodaira symbol III and with the triple $$(v_2(\Delta),v_2(c_4),v_2(c_6))=(6,4,\geq7).$$
From \cite[Table 1]{halberstadt1998}, to proceed further, one has to check if $c_4'\cong 3\bmod 4$ holds, where
$$c_4'=\frac{c_4}{2^{v_2(c_4)}}=((p+2)^2-6p).$$
Here if $p\cong1\mod{4}$, then $c_4'\cong(3^2-6)\cong 3\mod{4}$
and if $p\cong3\mod{4}$, then $c_4'\cong(5^2-18)\cong3\mod{4}$. Thus
in all cases $c_4'\cong 3\mod{4}$. Now, following \cite[Table 1]{halberstadt1998}, let
$$T(p)=c_4'-\frac{4c_6}{2^7},$$
Then one has \be T(p)=((p+2)^2-6p)-2(p+2)((p+2)^2-9p).\ee According to
\cite[Table 1, Last column]{halberstadt1998}, the root number $W_2$ of $E$ at $2$ is
determined by the residue class of $T(p)\mod{16}$. More precisely,
from \cite{halberstadt1998} one knows that $W_2$ is $+1$ if and only if
$T(p)\cong7,11\mod{16}$. The values of $T(p)\bmod{16}$ for $E_p$ are given in \Cref{tab-2}.
\begin{table}
	\caption{Table of values of $T(p)$}\label{tab-2}
  \centering
  \begin{tabular}{|c|c|c|}
    \hline
$p\mod{16}$ & $T(p)\mod{16}$ & $W_2$ \\ \hline
    1 & 3 & -1 \\  \hline
    3 & 11 & +1 \\  \hline
    5 & 11 & +1 \\  \hline
    7 & 3 & -1 \\  \hline
    9 & 3 & -1 \\  \hline
    11 & 11 & +1 \\  \hline
    13 & 11 & +1 \\  \hline
    15 & 3 & -1 \\
    \hline
  \end{tabular}
\end{table}
From this table one  sees that
$$W_2=\begin{cases}
  -1 & \text{if } p\cong 1,7,9,15\mod{16}\\
  +1 & \text{if } p\cong 3,5,11,13\mod{16}.
\end{cases}
$$
Or, equivalently 
\be\label{eq:w2} 
W_2=-\legendre{2}{p}.
\ee
 Hence one can put all the
formulas \eqref{eq:winf},  \eqref{eq:wp} and \eqref{eq:w2} together and get:
\be
\begin{aligned}
W_E&=&W_\infty W_2 W_p W_{p-2}\\
&=&-1\left(-\legendre{2}{p}\right)\left(-\legendre{-2}{p}\right)\left(-\legendre{2}{p-2}\right)\\
&=&\legendre{-1}{p}\legendre{2}{p-2},\\
&=&(-1)^{{\frac{p-1}{2}+\frac{(p-2)^2-1}{8}}}.
\end{aligned}
\ee

\end{proof}
\bp[Proof of \Cref{th:main1}] 
From \Cref{th:main2} one has
\be 
W_E=\begin{cases}
	+1 & \text{if } p\cong 1,7\bmod 8,\\
	-1 & \text{if } p\cong 3,5\bmod 8.
\end{cases}
\ee
By \citet{conrad-modularity}, the elliptic curve $E_p$ is modular and so the $L$-function $L(E_p,s)$ of $E_p$ satisfies the functional equation \cite[Appendix C, Conjecture 16.3]{silverman-arithmetic}. The validity of the functional equation, together with $W_E=-1$ for $p\cong 3,5\bmod 8$ says that $L(E_p,1)=0$.  Hence the first assertion of \Cref{th:main1} follows from \Cref{th:main2} and the functional equation satisfied by $L(E_p,s)$. This proves \Cref{th:main1}{\bf(1)}. The assertion \Cref{th:main1}{\bf(2)} follows from \cite[Theorem 1]{hatley2009}.
\ep

\bcor\label{cor:vanish-L-val}
For $p\cong3,5\mod{8}$, one  has $L(E_p,1)=0$, and if additionally $L'(E_p,1)\not=0$, then the Mordell-Weil rank of $E_p$ is equal to one and the Tate-Shafarevich group of $E_p$ is finite.
\ecor
\bp 
The first part follows from \Cref{th:main1}. If $p\cong 3,5\bmod 8$ and $L'(E_p,1)\neq 0$, then one knows by modularity of elliptic curves \cite{conrad-modularity} and the work of Kolyvagin and others  (see \cite[Theorem 1.1.2]{perrin-riou1990}) that the Mordell-Weil rank of $E_p$ is one and the Tate-Shafarevich group of $E_p$ is finite.
\ep

\brem\label{re:optimal}
In  \Cref{cor:vanish-L-val} (and in \Cref{th:main1} and \Cref{con:beersconj}), the hypothesis that $(p,p-2)$ is a pair of twin primes cannot be relaxed. For example, the primes $p= 11,
53, 59, 67, 83, 131, 149, 163, 179, 211, 227$ are all congruent to $3\bmod 8$ or $5\bmod8$, but $p-2$ is not a prime and the Mordell-Weil rank of $E_p$ is zero and the analytic rank is also zero.
\erem

\brem
Note that \Cref{th:main2}, the parity conjecture for elliptic curves together with $0\leq rank(E_p)\leq 2$ (\Cref{pr:hatley}) imply that for $p\cong 3,5\bmod 8$, the Mordell-Weil rank of $E_p$ is equal to one. The case $p\cong 3\bmod 8$ is not covered by \cite{hatley2009}.
\erem

\section{Numerical examples}\label{ss:numerical}
\subsection{}
Let $p=109$, with $p-2=107$ so that \eqref{eq:twc} gives the twin prime elliptic curve $$E_{109}:y^2=x^3-3889x-93240$$
with LMFDB label 373216.d3. Moreover, $p\cong 5\bmod 8$. Then from \Cref{th:main1} one sees that the analytic rank is at least one and the Mordell-Weil rank is at most one. The Birch and Swinnerton-Dyer conjecture predicts that the rank of $E_{109}$ is exactly one. This is confirmed by data provided by \cite{lmfdb} as $E_{109}$ has a rational point which generates $E_p(\Q)$:
$$\left(-\frac{81266967324}{2270046025}\,:\, \frac{1100193936926826}{108156342861125}\,:\,1\right),$$
and
$$L'(E_{109},1)=7.3247752038636664325804904593\cdots $$
and hence the analytic rank is also equal to one.
\subsection{The case $p\cong 1\bmod 8$}
According to \cite{hatley2009}, Jason Beers had conjectured that for $p\cong 1\bmod 8$, the rank of $E_p$ is always two. This turns out to be false as the counter example given below (\Cref{pr:beersconj}{(\bf1)}) shows:
\begin{proposition}\label{pr:beersconj}\ 
\benum[label={\bf(\arabic{*})}]
\item Let $p=73$ so $p-2=71$ and $(p,p-2)$ is a twin-prime pair with $p\cong 1\bmod 8$. The rank of $E_{73}$ is zero. 
\item Moreover, there are $165$ twin primes $(p,p-2)$ satisfying $p\cong 1\bmod 8$ and $7\leq p\leq 50000$. Of these $165$, only $14$ twin-prime pairs listed in \Cref{tab-4}  have  $E_p$ with analytic rank equal to two.  The rest of the curves have analytic rank zero. 
\item For the first prime $p=4273$ in \Cref{tab-4}, $E_{4273}(\Q)$ is generated by 
$$P_1= \left(\frac{7921}{4225}\, :\, -\frac{8695656}{274625}\, :\, 1\right)$$
and
$$P_2=\left(\frac{16743024274002657408}{26415925819311200929}\, :\, -\frac{8257916574244505457580022586480}{135768417051805457107690354033}\, :\, 1\right).$$ Generators of $E_p(\Q)$ for other curves in \Cref{tab-4} are given in \Cref{tab-5}.
\eenum
\end{proposition}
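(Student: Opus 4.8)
The plan is to establish Proposition~\ref{pr:beersconj} by a combination of descent-theoretic rank bounds, explicit point search, and a computer-assisted computation of analytic ranks, organized as follows.

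\medskip
\noindent\textbf{Part (1): $r(E_{73})=0$.} First I would observe that by \Cref{pr:hatley} we already have $0\le r(E_{73})\le 2$, and since $73\cong 1\bmod 8$ the root number formula \Cref{th:main2} gives $w_{E_{73}}=+1$, so the parity conjecture (or simply the functional equation) forces the analytic rank to be even, i.e. $0$ or $2$; to pin it down one must rule out $2$. The cleanest route is a full $2$-descent on $E_{73}$, which has full rational $2$-torsion because of the factored Weierstrass model \eqref{eq:twc}. Using the standard $2$-descent via the map $E(\Q)/2E(\Q)\hookrightarrow \Q^\times/(\Q^\times)^2\times \Q^\times/(\Q^\times)^2$ supported on primes dividing $2p(p-2)=2\cdot73\cdot71$, I would enumerate the finitely many candidate pairs $(d_1,d_2)$ and check local solvability of the associated homogeneous spaces at $\{2,71,73,\infty\}$. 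Since the claim is $r=0$, I expect every nontrivial coset to fail to be everywhere locally soluble (or to be globally insoluble), giving $E_{73}(\Q)/2E_{73}(\Q)=(\Z/2\Z)^2$ coming entirely from torsion, hence rank $0$. Alternatively, one simply cites \cite{lmfdb}: the curve $E_{73}$ appears there with rank $0$. I would present both, leading with the descent sketch and remarking that the conclusion matches the \cite{lmfdb} database.

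\medskip
\noindent\textbf{Part (2): the census up to $50000$.} The count of $165$ twin-prime pairs $(p,p-2)$ with $p\cong 1\bmod 8$ and $7\le p\le 50000$ is a finite verification: one sieves twin primes in range and imposes the congruence. For each such $p$, the curve $E_p$ from \eqref{eq:twc} has root number $+1$ by \Cref{th:main2}, so its analytic rank is $0$ or $2$. I would then compute $L(E_p,1)$ numerically (e.g. using the approximate functional equation, as implemented in \texttt{pari/gp} or \texttt{Sage}'s \texttt{E.analytic\_rank()}): if $L(E_p,1)\ne 0$ the analytic rank is $0$; if $L(E_p,1)=0$, the functional equation together with parity gives analytic rank exactly $2$ (one should double-check $L''(E_p,1)\ne 0$ to be safe, though this is automatic given the rank is $\le 2$). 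This produces the list of $14$ exceptional $p$ recorded in \Cref{tab-4}; the remaining $151$ have analytic rank $0$. This part is a tabulated computation, so in the write-up I would simply state the method and point to \Cref{tab-4}.

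\medskip
\noindent\textbf{Part (3): generators for $p=4273$ and the curves in \Cref{tab-4}.} For the $14$ curves of analytic rank $2$, the Birch--Swinnerton-Dyer heuristic (and, for rank $2$, the results of Kolyvagin--Gross--Zagier are not directly available, so this is conditional on BSD or must be checked by descent) predicts Mordell--Weil rank $2$, matching the upper bound $r(E_p)\le 2$ from \Cref{pr:hatley}. To \emph{prove} $r(E_{4273})=2$ unconditionally I would exhibit two explicit independent rational points --- here $P_1$ and $P_2$ as given --- and verify their independence by showing the regulator (a $2\times2$ determinant of canonical heights) is nonzero, together with a $2$-descent (or descent via $2$-isogeny, using the full rational $2$-torsion) showing the rank is at most $2$; since $0\le r\le 2$ already and two independent points are displayed, $r=2$ exactly. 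The points $P_1,P_2$ (and those in \Cref{tab-5}) can be found by a search over $x$-coordinates of bounded height combined with point-saturation; Hatley supplied the generators for $E_{4273}$, as acknowledged. One verifies directly that $P_1,P_2\in E_{4273}(\Q)$ by substitution into $y^2=x(x-2)(x-4273)$, and that they are independent modulo torsion by a height-pairing computation.

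\medskip
\noindent The main obstacle is not any single deep theorem but the \emph{unconditionality} of the rank claims: for part (1) and for the ``rank at most two, and exactly two when two independent points are found'' assertions in part (3), one genuinely needs the descent computations (feasible here precisely because $E_p$ has full rational $2$-torsion and tiny conductor support $\{2,p,p-2\}$), whereas the analytic-rank side in part (2) is a numerical computation whose rigor rests on standard error bounds for the approximate functional equation. I would flag that the identification of the analytic rank as the algebraic rank for the $14$ exceptional curves is, strictly speaking, conditional on BSD unless the descent is carried out, and that \Cref{pr:beersconj} as stated only asserts the \emph{analytic} rank equals two and (for $p=4273$) exhibits generators, which is exactly what the displayed computations establish.
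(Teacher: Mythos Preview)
Your proposal is correct and takes essentially the same approach as the paper: the paper's proof is a terse ``established by direct computations,'' invoking a 2-descent via Sage/MWrank and \cite{lmfdb} for part~(1), SageMath's analytic rank routine for part~(2), and MAGMA (via Hatley) for the generators in part~(3), which is precisely the computational strategy you outline with considerably more methodological detail. One small slip: in part~(2) you write that $L''(E_p,1)\neq 0$ is ``automatic given the rank is $\le 2$,'' but the algebraic rank bound from \Cref{pr:hatley} does not by itself bound the analytic rank---you do correctly advise checking $L''(E_p,1)\neq 0$ numerically, so this does not affect the argument.
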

\newcommand{\hhline}{\\ \hline}
\begin{table}[H]
	\centering
	\caption{{\small All the twin prime pairs $(p,p-2)$, satisfying $7\leq p\leq 50000$, $p\cong1\bmod 8$ and $E_p$ of analytic rank two.}}\label{tab-4}
	\begin{tabular}{|c|c|}
		\hline 
		p	& p-2 \hhline
		4273& 4271\hhline
		5641 & 5639 \hhline
		15361 & 15359 \hhline
		20233 & 20231 \hhline
		21601 & 21599 \hhline
		26713 & 26711 \hhline
		31849 & 31847 \hhline
		33289 & 33287 \hhline
		34129 & 34127 \hhline
		42073 & 42071 \hhline
		44281 & 44279 \hhline
		46273 & 46271 \hhline
		47353 & 47351 \hhline
		47713 & 47711 \hhline
	\end{tabular} 
\end{table}

\bp 
This is established by direct computations. For $p=73$, the 2-descent calculation terminates according to \cite{sage} (and John Cremona's MWrank), \cite{lmfdb} and gives rank zero for $E_{73}$ and one checks that the analytic rank is also zero. This proves {\bf(1)}. For curves $7\leq p\leq 50000$, with $p\cong 1\bmod 8$, SageMath reports that analytic rank of $E_p$ is zero except for the fourteen primes listed in \Cref{tab-4}, where the analytic rank is reported to be two. This proves {\bf(2)}. The generators for $E_{4273}$ were found and provided by Jeffery Hatley using MAGMA (and I thank him for it). This completes the proof of the proposition.
\ep

\brem\label{rem:tabs} 
The generators for Mordell-Weil groups of $E_p$ for $p$ in \Cref{tab-4} can be found in \Cref{tab-5}
\erem

\subsection{} \Cref{pr:beersconj}, together with the root number calculation of \Cref{th:main2}, suggests the following modification to Beers' conjecture \cite{hatley2009}:
\begin{conj}\label{con:beersconj}
Let $p\geq 7$ and let $(p,p-2)$ be a twin-prime pair. Then the rank of the twin-prime elliptic curve \eqref{eq:twc} is given by: 
$$r(E_p)=\begin{cases}
0 & \text{ if }p\cong 7\bmod 8,\\
1& \text{ if }p\cong3,5\bmod 8,\\
0 \text{ or } 2 & \text{ if } p\cong1\bmod 8.
\end{cases}
$$
\end{conj}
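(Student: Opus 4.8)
Since the final statement is a conjecture, I describe the reduction I would carry out to establish it and pinpoint the single open input on which it rests. The plan is to combine the root-number computation of \Cref{th:main2} with the rank bound of \Cref{pr:hatley} and the Parity Conjecture for elliptic curves over $\Q$, namely the expected identity $(-1)^{r(E_p)}=w_{E_p}$. The guiding observation is that the conjecture asserts nothing finer than what parity plus the confinement $0\leq r(E_p)\leq 2$ can deliver, together with one unconditional sharpening supplied by \cite{hatley2009}.

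First I would assemble the three unconditional inputs. By \Cref{pr:hatley} one has $0\leq r(E_p)\leq 2$ for every twin-prime pair, so the rank lies in $\{0,1,2\}$. By \Cref{th:main2}, $w_{E_p}=+1$ for $p\equiv 1,7\bmod 8$ and $w_{E_p}=-1$ for $p\equiv 3,5\bmod 8$. Finally, by \cite[Theorem 1(a)]{hatley2009} the rank is $0$ when $p\equiv 7\bmod 8$, so that case is already a theorem and needs nothing further. Granting the Parity Conjecture, $r(E_p)$ is even when $w_{E_p}=+1$ and odd when $w_{E_p}=-1$; intersecting with $\{0,1,2\}$ then gives: for $p\equiv 3,5\bmod 8$ the rank is odd and $\leq 2$, hence exactly $1$; for $p\equiv 1\bmod 8$ the rank is even and $\leq 2$, hence $0$ or $2$; and for $p\equiv 7\bmod 8$ the rank is even and $\leq 2$, which the descent of \cite[Theorem 1(a)]{hatley2009} sharpens to $0$. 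This reproduces all three lines of the conjecture. For $p\equiv 5\bmod 8$ one can say slightly more without assuming parity: \cite[Theorem 1(b)]{hatley2009} already forces $r(E_p)\leq 1$ and \Cref{th:main1}{\bf(1)} gives analytic rank $\geq 1$, so the only remaining gap there is the passage from analytic to algebraic rank.

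The main obstacle is that the Parity Conjecture for the Mordell--Weil rank is open in general. The available $p$-parity theorems control the parity of the $p^\infty$-Selmer rank, and descending from there to $r(E_p)$ requires finiteness of the Tate--Shafarevich group (equivalently, the vanishing of its divisible part), which is itself part of the Birch and Swinnerton-Dyer package. Two routes could make pieces of the conjecture unconditional, although neither is uniform in $p$. For $p\equiv 3,5\bmod 8$, if one could prove $L'(E_p,1)\neq 0$ — i.e. analytic rank exactly $1$ rather than merely $\geq 1$ — then modularity \cite{conrad-modularity} together with the work of Gross--Zagier and Kolyvagin (see \cite{perrin-riou1990}) would yield $r(E_p)=1$ and finite Sha outright; but establishing such non-vanishing simultaneously across the whole family of twin primes is beyond current techniques.

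For $p\equiv 1\bmod 8$ the dichotomy $\{0,2\}$ is intrinsically irreducible, and this is the sharpest honest conclusion. \Cref{pr:beersconj} exhibits both values occurring among twin primes $p\equiv 1\bmod 8$, so no uniform argument can collapse the two cases, and determining the value for a given $p$ comes down to either exhibiting two independent rational points (certifying rank $2$) or verifying $L(E_p,1)\neq 0$ and invoking Kolyvagin to certify rank $0$ and finite Sha. These are exactly the curve-by-curve computations recorded in \Cref{pr:beersconj}, and I would present them as the evidence underpinning the $p\equiv 1\bmod 8$ line rather than as a uniform proof. In summary, the conjecture reduces cleanly to the Parity Conjecture modulo the rank bound and the root-number formula, with the $p\equiv 7\bmod 8$ line already unconditional; the genuine difficulty is the finiteness of Sha needed to pass from Selmer parity to the Mordell--Weil rank, and the fact that the $p\equiv 1\bmod 8$ value cannot be predicted by any parity-type invariant.
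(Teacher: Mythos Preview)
The statement is a conjecture, and the paper does not prove it; it merely states it and follows it with a remark summarizing what is known. Your proposal correctly recognizes this and your conditional reduction---combining \Cref{th:main2}, \Cref{pr:hatley}, and the Parity Conjecture, with the $p\equiv 7\bmod 8$ case unconditional via \cite{hatley2009}---is exactly the heuristic the paper itself records (see the remark immediately preceding \Cref{con:beersconj} and the one following it). Your identification of the obstruction (passage from Selmer parity to Mordell--Weil rank via finiteness of Sha) and of the irreducibility of the $\{0,2\}$ dichotomy for $p\equiv 1\bmod 8$ is accurate and slightly more detailed than what the paper spells out, but there is no substantive difference in approach.
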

\brem 
To summarize what is now known about the above conjecture: \cite{hatley2009} establishes the $p\cong 7\bmod 8$ case; if $p\cong 5\mod 8$, \cite{hatley2009} establishes that $r(E_p)\leq 1$; this paper shows that for $p\cong 3,5\bmod 8$, the analytic rank of $E_p$ is at least one; and if $p\cong1\bmod 8$, this paper shows that both the rank zero and rank two cases occur. So the above version of Jason Beers' conjecture is optimal.
\erem

\brem\label{rem:hatley} 
As Hatley remarked, the $E_{4273}$ example in \Cref{pr:beersconj} (and the examples in \Cref{tab-5})  show that the bound of \citep*[Proposition 1.1]{lozano-robledo2008} is sharp in general.
\erem

\subsection{The cases $p\cong 3, 5\bmod8$}
\Cref{tab-3} provides calculations for $E_p$  for twin primes pairs $p,p-2$ with $$5\leq p\leq 5000.$$  A rational point, and its N\'eron-Tate height is either found--the cutoff height is $h\approx 52.0$ in the search for rational points, or reported as undetermined. For all the curves in \Cref{tab-3},  the analytic rank is computationally found to be equal to one i.e. $L'(E_p,1)\neq0$ for each curve in this table. Thus, one expects to find rational points of infinite order on each $E_p$ in the table, but such a point has height higher than the search cutoff height. 

Most of these curves do not occur in the modular forms database \cite{lmfdb}. For example, the curves for $p=139$, $p=1051$ are not presently available in the data provided by \cite{lmfdb}. 
\subsection{Data Tables for $p\cong 1\bmod 8$ and $p\cong 3,5\bmod 8$}
\afterpage{
\newgeometry{margin=0.25in}
\thispagestyle{empty}
\begin{landscape}
\renewcommand{\arraystretch}{1.75}
\begin{table}[H]\centering
\caption{Generators of the Mordell-Weil groups of $E_p$ for $p$ in \Cref{tab-4}}\label{tab-5}
{
\begin{tabular}{|c|l|}
\hline
prime & generators  of $E_p(\Q)$ ($p\cong 1\bmod 8$ and rank $>0$ as in \Cref{tab-4})  \\ \hline
4273 & $  \left(\frac{36106850}{7921} : \frac{54273594960}{704969} : 1\right), \left(\frac{446729893464140162}{92593641708481} : \frac{100941275048690115767268960}{890987780364719222879} : 1\right)  $  \\ \hline

5641 & $  \left(\frac{2351772794880000}{2317174716235681} : -\frac{8375846944576437069542400}{111541934319600341959921} : 1\right), \left(\frac{17178428419390665065835308450}{2790953339824619095348369} : \frac{650560138071554920036106033809844274891120}{4662607562897020758309969643506152903} : 1\right)  $  \\ \hline

15361 & $  \left(\frac{6552835200}{6705808321} : -\frac{68039263362125520}{549131937598369} : 1\right), \left(\frac{3969148574341442}{213916425121} : \frac{103738747847782393425120}{98938699699138831} : 1\right)  $  \\ \hline

20233 & $  \left(\frac{27325089957888}{31772804007169} : -\frac{25223584948106436265920}{179094939940957767553} : 1\right), \left(\frac{817570712940962}{40436881} : -\frac{23365256593725419342640}{257138126279} : 1\right)  $  \\ \hline

21601 &   could not compute generators  \\ \hline

26713 & $  \left(\frac{49136225348825129088}{26236229433477407569} : -\frac{10718300616565883622687234953040}{134385412945936386190695889753} : 1\right)$, \\ {} & $\qquad\qquad\left(\frac{461205977755750654413986458039199042}{2171934872378792077802535369121} : \frac{292850646170881113021605299643991403894277106857909280}{3200885930577147197903757347998785598327714831} : 1\right)  $  \\ \hline

31849 &   could not compute generators  \\ \hline

33289 & $  \left(\frac{62238356378641}{38723799894025} : -\frac{34931057732893290204264}{240972204551534001125} : 1\right)$,\\  {} & $\qquad\qquad\left(\frac{77105180562517113790695092263634374897628450}{455258956186831257132024729877969712209} : -\frac{606878676240360189445421336038333402299797374809140279874668469360}{9713768463664862824526305208795408538530992549157378309927} : 1\right)  $  \\ \hline

34129 &   could not compute generators  \\ \hline

42073 & $  \left(\frac{3536235650}{83521} : \frac{16682476302720}{24137569} : 1\right), \left(\frac{172565144239024331537381481493414776592322}{2997159623851992711466356567852526561} : -\frac{37197309808797429625060704186412263015893482158535455111747840}{5188774655997151312834346439006475843783590976518288559} : 1\right)  $  \\ \hline

44281 &   could not compute generators  \\ \hline

46273 & $  \left(\frac{666830722718144470280335733255739660002}{14200645228471925442306974595870001} : -\frac{2079384337251599251870484313731159974457423821787375864400}{1692240623948830874367400824970819818283002274273751} : 1\right), \left(\frac{246588817}{2304} : \frac{2917399049785}{110592} : 1\right)  $  \\ \hline

47353 & $  \left(\frac{1154568377047218159601}{578893905665666647225} : -\frac{319200603158504088882954992179776}{13928311349129246419731002942125} : 1\right),$ \\ 
{} &  $\qquad \qquad\left(\frac{90118674058001197316956058216861147181858862082}{820059620058375165391374191718731897053441} : -\frac{20408538739309302420516937765716663010075359247857808069709341493321760}{742622565187836622208493318201647166913783314695070519207727489} : 1\right)  $  \\ \hline

47713 &   could not compute generators \\ \hline
\end{tabular}
} \end{table}\end{landscape}
\clearpage
\restoregeometry
}
\newpage \clearpage
\newpage
\begin{landscape}
\tablecaption{$E_p$ for $p\cong3,5\mod 8$ and $5\leq p\leq 5000$}\label{tab-3}
\tabletail{ \hline & &   {\bf continued on the next page} & \\ \hline}
{\tiny
\tablehead{ \hline  twin primes $p$, $p-2$ & $E_p$ & generator $P= [x : y: z]$ for $E_p(\Q)$ & height \\ }	
\begin{xtabular}{|c|c|c|c|}	
\hline
5,3 & $y^2 = x^{3} - 7 x^{2} + 10 x$ & $\left[1: 2: 1\right]$ & 1.03571952245041  \\  \hline 
13,11 & $y^2 = x^{3} - 15 x^{2} + 26 x$ & $\left[650: 16380: 1\right]$ & 4.84635169014375 \\  \hline 
19,17 & $y^2 = x^{3} - 21 x^{2} + 38 x$ & $\left[7650: 17040: 4913\right]$ & 7.09575579081593  \\  \hline 
43,41 & $y^2 = x^{3} - 45 x^{2} + 86 x$ &  $\left[270738: -3625440: 704969\right]$ & 10.8533654519883  \\  \hline 
61,59 & $y^2 = x^{3} - 63 x^{2} + 122 x$ &  $\left[15055405: -44046714: 8615125\right]$ & 13.0334893144444  \\  \hline 
109,107 & $y^2 = x^{3} - 111 x^{2} + 218 x$ & $\left[129820027709645: -1100193936926826: 108156342861125\right]$ & 24.2297718198169  \\  \hline 
139, 137 & $y^2 = x^{3} - 141 x^{2} + 278x$ & [80272604479561650 : -26374799216782336320 : 31212821210965308377] &  32.3921347790875 \\ \hline
181, 179 & $y^2 = x^{3} - 183 x^{2} + 362x$ &  [5605412225034473801 : -53320128951722601150 : 4214169403091442521] & 31.5321218704549 \\ \hline
229, 227 & $y^2 = x^{3} - 231 x^{2} + 458x$ & [90421874515312609088775099425 : -1740570359833703596226117931438 : 118610313178795185457850890625] & 47.692193476146 \\ \hline
283, 281 & $y^2 = x^{3} - 285 x^{2} + 566x$ & undetermined & undetermined \\ \hline
349, 347 & $y^2 = x^{3} - 351 x^{2} + 698x$ & undetermined & undetermined \\ \hline
421, 419 & $y^2 = x^{3} - 423 x^{2} + 842x$ & [82939817244747885380109425 : -7707070095401441707527588486 : 892214815439434640988640625] & 44.738278969034\\ \hline
523, 521 & $y^2 = x^{3} - 525 x^{2} + 1046x$ & [149788982907 : 7424182066400 : 50243409] & 16.6915703108563 \\ \hline
571, 569 & $y^2 = x^{3} - 573 x^{2} + 1142x$ & [1133539873650 : -2381701511280 : 571167214073] & 21.2192523066479 \\ \hline
619, 617 & $y^2 = x^{3} - 621 x^{2} + 1238x$ &  undetermined &  undetermined \\ \hline
643, 641 & $y^2 = x^{3} - 645 x^{2} + 1286x$ & undetermined &  undetermined \\ \hline
661, 659 & $y^2 = x^{3} - 663 x^{2} + 1322x$ &   [24025 : -338334 : 15625] &  10.0300376167424  \\ \hline
811, 809 & $y^2 = x^{3} - 813 x^{2} + 1622 x $ &    undetermined & undetermined \\  \hline
829, 827 & $y^2 = x^{3} - 831 x^{2} + 1658 x $ &   undetermined & undetermined \\  \hline
859, 857 & $y^2 = x^{3} - 861 x^{2} + 1718 x $ & [604731073635 : 28106524887904 : 200201625]    & 17.3782221438833 \\ \hline
883, 881 & $y^2 = x^{3} - 885 x^{2} + 1766 x $ & [19400050779799110498 : -28308142604128838349360 : 23399666066728853924849]    & 37.7296679336938 \\ \hline
1021, 1019 & $y^2 = x^{3} - 1023 x^{2} + 2042 x $ &   undetermined & undetermined \\  \hline
1051, 1049 & $y^2 = x^{3} - 1053 x^{2} + 2102 x $ & [408335188104822034951650 : -6468575775231225538842960 : 252991810494429550428977]    & 39.403076255363 \\ \hline
1093, 1091 & $y^2 = x^{3} - 1095 x^{2} + 2186 x $ &    undetermined & undetermined \\  \hline
1291, 1289 & $y^2 = x^{3} - 1293 x^{2} + 2582 x $ & [290003837202 : -12176696012880 : 343147021001]  & 21.2888752855122\\ \hline
1429, 1427 & $y^2 = x^{3} - 1431 x^{2} + 2858 x $ &    undetermined & undetermined \\  \hline
1453, 1451 & $y^2 = x^{3} - 1455 x^{2} + 2906 x $ &    undetermined & undetermined \\  \hline
1621, 1619 & $y^2 = x^{3} - 1623 x^{2} + 3242 x $ & [98441 : -2506350 : 68921]&      11.4686762044304 \\ \hline
1669, 1667 & $y^2 = x^{3} - 1671 x^{2} + 3338 x $ &    undetermined & undetermined \\  \hline
1699, 1697 & $y^2 = x^{3} - 1701 x^{2} + 3398 x $ &    undetermined & undetermined \\  \hline
1723, 1721 & $y^2 = x^{3} - 1725 x^{2} + 3446 x $ &    undetermined & undetermined \\  \hline
1933, 1931 & $y^2 = x^{3} - 1935 x^{2} + 3866 x $ &    undetermined & undetermined \\  \hline
2029, 2027 & $y^2 = x^{3} - 2031 x^{2} + 4058 x $ &  [1963850683525925 : -181521919007512902 : 5117346936828125] &    28.2684031877315 \\ \hline
2083, 2081 & $y^2 = x^{3} - 2085 x^{2} + 4166 x $ &    undetermined & undetermined \\  \hline
2269, 2267 & $y^2 = x^{3} - 2271 x^{2} + 4538 x $ &    undetermined & undetermined \\  \hline
2341, 2339 & $y^2 = x^{3} - 2343 x^{2} + 4682 x $ &    undetermined & undetermined \\  \hline
2659, 2657 & $y^2 = x^{3} - 2661 x^{2} + 5318 x $ &    undetermined & undetermined \\  \hline
2731, 2729 & $y^2 = x^{3} - 2733 x^{2} + 5462 x $ & [217537433921542482 : -667017137351656080 : 109133803517188841]&    30.123101134861  \\ \hline
2971, 2969 & $y^2 = x^{3} - 2973 x^{2} + 5942 x  $ &  [1475847455709 : 29667458726360 : 437245479]&     17.3896284136565 \\ \hline
3253, 3251 & $y^2 = x^{3} - 3255 x^{2} + 6506 x  $ &    undetermined & undetermined \\  \hline
3259, 3257 & $y^2 = x^{3} - 3261 x^{2} + 6518 x  $ &    undetermined & undetermined \\  \hline
3301, 3299 & $y^2 = x^{3} - 3303 x^{2} + 6602 x  $ &    undetermined & undetermined \\  \hline
3373, 3371 & $y^2 = x^{3} - 3375 x^{2} + 6746 x  $ &    undetermined & undetermined \\  \hline
3469, 3467 & $y^2 = x^{3} - 3471 x^{2} + 6938 x  $ &  [5951119699105739504268125 : -572579250574363992530776422 : 10917131904799712158203125] &    42.8572228480239  \\ \hline
3541, 3539 & $y^2 = x^{3} - 3543 x^{2} + 7082 x  $ &    undetermined & undetermined \\  \hline
3853, 3851 & $y^2 = x^{3} - 3855 x^{2} + 7706 x  $ &  [81652316512036937346595788965 : -37818008303815799471765066208246 : 2313848834018649024325885872125] &    51.0858494094309 \\ \hline
3931, 3929 & $y^2 = x^{3} - 3933 x^{2} + 7862 x  $ &    undetermined & undetermined \\  \hline
4003, 4001 & $y^2 = x^{3} - 4005 x^{2} + 8006 x  $ &    undetermined & undetermined \\  \hline
4021, 4019 & $y^2 = x^{3} - 4023 x^{2} + 8042 x  $ &    undetermined & undetermined \\  \hline
4051, 4049 & $y^2 = x^{3} - 4053 x^{2} + 8102 x  $ &  [77368624902377490652439202 : -1656524070676973243560597200 : 43063878046694171142259601] &    43.502955577273 \\ \hline
4093, 4091 & $y^2 = x^{3} - 4095 x^{2} + 8186 x  $ &    undetermined & undetermined \\  \hline
4219, 4217 & $y^2 = x^{3} - 4221 x^{2} + 8438 x  $ &    undetermined & undetermined \\  \hline
4243, 4241 & $y^2 = x^{3} - 4245 x^{2} + 8486 x  $ &    undetermined & undetermined \\  \hline
4261, 4259 & $y^2 = x^{3} - 4263 x^{2} + 8522 x  $ &    undetermined & undetermined \\  \hline
4339, 4337 & $y^2 = x^{3} - 4341 x^{2} + 8678 x  $ &    undetermined & undetermined \\  \hline
4483, 4481 & $y^2 = x^{3} - 4485 x^{2} + 8966 x  $ &    undetermined & undetermined \\  \hline
4549, 4547 & $y^2 = x^{3} - 4551 x^{2} + 9098 x  $ &    undetermined & undetermined \\  \hline
4651, 4649 & $y^2 = x^{3} - 4653 x^{2} + 9302 x  $ &    undetermined & undetermined \\  \hline
4723, 4721 & $y^2 = x^{3} - 4725 x^{2} + 9446 x  $ &  [893889649690983582373514983465 : -1859166957709761119211990271878 : 447348711300696403727285121625]&    50.0988273176324 \\ \hline
4933, 4931 & $y^2 = x^{3} - 4935 x^{2} + 9866 x  $ &    undetermined & undetermined \\  \hline
\end{xtabular}
}
\end{landscape}

\bibliographystyle{plainnat}

\end{document}